\DeclareMathOperator\erfc{erfc}
\newcommand{\half}{ {\scriptstyle \frac{1}{2} }  }
\newcommand{\onefour}{ {\scriptstyle \frac{1}{4} }  }
\newcommand{\defeq}{\mathrel{\mathop:}=}
\newtheorem{theorem}{Theorem}
\title{Riemann Hypothesis: a GGC factorisation}
\author{
Nicholas G. Polson\footnote{Nicholas Polson is Professor of Econometrics and Statistics at ChicagoBooth: ngp@chicagobooth.edu.
I would like to thank Lennart Bondesson for many helpful conversions and Jianeng Xu for his comments.}\\
\textit{University of Chicago}
}
\date{October 8, 2018}
\begin{document}
	
\maketitle
\begin{abstract}
	\noindent A GGC (Generalized Gamma Convolution) representation for Riemann's reciprocal $\xi$-function is constructed.
	\vspace{0.5pc}
		
	\noindent {\bf Keywords:} RH, GGC, Zeta and Xi-function, Thorin's Condition.
\end{abstract}

\section{Introduction}

Riemann (1859) defines  the $\zeta $-function via the analytic continuation of $ \sum_{n=1}^\infty n^{-s} $ on  the region $ Re(s) > 1 $ and the $\xi$-function by
\begin{equation}
\xi(s) = \half s(s-1) \pi^{-\half s} \Gamma \left ( \half s \right ) \zeta (s)  .
\end{equation}
The Riemann Hypothesis (RH) states that all the non-trivial zeroes of $\zeta(s)$ lie on the critical line $ \text{Re}  (s) = \half $,
or equivalently, those of $ \xi(\half + is) = \xi (\half -i s)$ lie on the real axis.

The $\xi$-function is an entire function of order one and hence admits a Hadamard factorisation. 
Titchmarsh (1974, 2.12.5) shows Hadamard's factorization theorem gives, for all values of $s$, with $b_0 = \half \log ( 4 \pi ) - 1 - \half \gamma $ and $ \xi(0) = - \zeta (0) = \half $,
such that 
\begin{equation}
\xi ( s ) = \xi ( 0)  e^{ b_0 s} \prod_\rho \left ( 1 - \frac{s}{\rho} \right ) e^{ \frac{s}{\rho} } 
\end{equation}
The zeros, $ \rho $,  of $ \xi $ correspond to the non-trivial zeros of $ \zeta$.

The argument to show RH proceeds by showing that 
	it is equivalent to the existence of a generalised gamma convolution (GGC) random variable, denoted by $ H_\half^\xi$, whose Laplace transform expresses the reciprocal
	$ \xi$-function as
	\begin{equation}
	\frac{ \xi \left ( \half \right ) }{ \xi \left ( \half  + \sqrt{s} \right ) } =   E ( \exp ( - s H^\xi_\half )  ) .
	\end{equation}	
This is known as Thorin's condition  (Bondesson, 1992, p.124]).

To see this, first assume that RH is true. Then the zeroes of $ \xi $ are of the form $ \rho = \half \pm i \tau $ as  $ \xi(s)= \xi(1-s)$.
The Hadamard factorisation is then
\begin{equation}\label{xihad1}
\xi(s) =  \xi(0 ) \prod_{ \tau > 0 } \left ( 1 - \frac{s}{ \half + i \tau } \right )\left ( 1 - \frac{s}{ \half - i \tau  } \right )  .
\end{equation}
Now $ \xi(\half) =  \xi(0 ) \prod_{ \tau > 0 } \tau^2 / ( \onefour + \tau^2 ) $ as
\begin{equation}\label{xihad1}
\xi(s) =  \xi(0 ) \prod_{ \tau > 0 } \frac{ ( s - \half )^2 + \tau^2 }{ \onefour + \tau^2 } .
\end{equation}
Hence,  (\ref{xihad1}) shows that the reciprocal $ \xi$-function  satisfies 
\begin{equation}
\frac{ \xi \left ( \half \right ) }{ \xi \left ( \half  + s \right ) }  = \prod_{ \tau > 0 } \frac{\tau^2}{\tau^2 + s^2} .
\end{equation}
Using Frullani's identity, $ \log ( z / z+s^2 ) = \int_0^\infty ( 1 - e^{-s^2 t} ) e^{-tz} dt /t $, write
\begin{align}
\frac{ \xi \left ( \half \right ) }{ \xi \left ( \half  + s \right ) } &  =  \prod_{ \tau > 0 } \frac{\tau^2}{\tau^2 + s^2}
= \exp \left \{ \int_0^\infty \log \left ( \frac{z}{z+s^2} \right ) U(dz) \right \} \\
& =  \exp \left ( - \int_0^\infty ( 1 - e^{- s^2 t} ) g_\half^\xi ( t ) \frac{dt}{t}   \right ) = E ( \exp ( - s^2 H^\xi_\half )  ) .
\end{align}
Here $ g_\half ^\xi( t ) = \int_0^\infty e^{-tz} U_\half ( dz ) $ and $ U_\half (dz) = \sum_{\tau > 0} \delta_{ \tau^2 } ( dz) $ with $ \delta $ a Dirac measure. 

The  GGC random variable $ H^\xi_\half  \stackrel{D}{=} \sum_{ \tau > 0} Y_\tau $ where $ Y_\tau \sim {\rm Exp} ( \tau^2  ) $ satisfies
\begin{equation}
\prod_{\tau > 0} \frac{\tau^2}{\tau^2 + s^2} = E \left ( \exp ( -s^2 H_\half^\xi ) \right ) .
\end{equation}
Conversely, if $ \xi \left ( \half \right ) / \xi \left ( \half  + \sqrt{s} \right ) = E ( \exp ( - s H_\half^\xi )  )  $ then $ \xi( \half + s ) $ has no zeroes. Then
$ \xi (s) $ has no zeroes for $ Re(s) > \half $ and $ \xi(s) = \xi(1-s) $, implies no zeroes for $ Re(s) < \half $ either. 

\subsection{GGC Properties}

The GGC class of probability distributions on $[0,\infty)$ have Laplace transform (LT) which takes the form, with (left-extremity) $ a \geq 0 $, for $ s > 0 $,
$$
\mathbb{E} \left ( e^{-s H} \right ) = \exp \left ( - a s + \int_{(0, \infty)}  \log \left ( z/(z+s) \right ) U(dz) \right )
$$
Here $U(dz)$ a non-negative measure on $(0,\infty)$ (with finite mass on any compact set of $(0, \infty) $)
such that $ \int_{(0,1)}  | \log t | U( d z ) < \infty $ and $ \int_{ (0, \infty) }  z^{-1} U( dz ) < \infty $, see Bondesson (1992).
The $\sigma$-finite measure $ U $ on $ (0 , \infty ) $ is chosen so that the exponent 
\begin{equation}
\phi( s ) = \int_{ ( 0 , \infty )} \log ( 1 + s/z ) U( dz ) = \int_{(0, \infty)\times (0, \infty)}   ( 1 - e^{- sz} ) t^{-1} e^{-tz} U( dz )  < \infty.
\end{equation}
$U$ is often referred to as the Thorin measure and can have infinite mass.
The corresponding L\'evy measure is  $  t^{-1}  \int_{ ( 0 , \infty ) } e^{- t z} U( dz ) $. 

A key property of  the class of GGC distributions is that it is equivalent to the class of generalized convolutions
of mixtures of exponentials (Kent, 1982).  Hence, we can write $ H  \stackrel{D}{=} \sum_{ \gamma > 0} Y_\gamma $ where $ Y_\gamma \sim {\rm Exp} ( \gamma^2 ) $.  This is central to characterizing the zeros of the zeta function and constructing its Hadamard factorization.

\section{Riemann's $\xi$-function and GGC representation}

The following Theorem (Polson, 2017) provides an GGC representation of  
Riemann's reciprocal $\xi$-function.
First, by definition, 
\begin{align}\label{xidefn}	
	\xi(\alpha + s) &  = ( \alpha -1 + s ) \pi^{- \half (\alpha + s)} \Gamma \left ( 1 + \half  ( \alpha + s ) \right ) \zeta(\alpha + s) \\
\xi \left ( \alpha \right )  & =  ( \alpha -1 )  \pi^{- \half \alpha} \Gamma \left ( 1 + \half \alpha \right ) \zeta \left ( \alpha \right ) .
	\end{align}
\begin{theorem}
	Riemann's  reciprocal $ \xi$-function satisfies, for $ \alpha > 1 $ and $ s > 0 $,
\begin{align}\label{xirecip}
	\frac{\xi(\alpha)}{\xi(\alpha+ s)} & = 
	\exp \left ( -  b_\alpha s  + \int_0^\infty ( e^{ - \half s^2 t} -1 ) \frac{ \nu_\alpha^\xi ( t ) }{t} d t  \right ) 
	\end{align}
	where $ b_\alpha = \xi^\prime ( \alpha) / \xi ( \alpha  ) - 1 / (\alpha-1) $.	
Here $\nu_\alpha^\xi(t) = \nu_\alpha^\Gamma(t) + \nu_\alpha^\zeta(t) + \nu_\alpha^0(t)$ with
	\begin{align}
	\nu_\alpha^\Gamma  ( t) &= \frac{1}{\sqrt{2 \pi t}} \int_0^\infty  \frac{ 1 - e^{ - x^2 / 2 t}}{ 1 - e^{-2 x}} e^{- ( \alpha + 2 ) x  }   d x  \\
\nu^\zeta_\alpha ( t) & =  \frac{1}{\sqrt{2\pi t}} \sum_{n \geq 2} \frac{\Lambda(n)}{n^\alpha} \left( 1 - e^{-(\log^2 n)/2t} \right) \\
\nu^0_\alpha (t) &= \half e^{ \half (\alpha-1)^2 t}  \erfc ( (\alpha-1) \sqrt{\half t})\end{align}
	Moreover $ \nu_\alpha^\mu (t ) = \int_0^\infty e^{-tz} U_\alpha^\mu ( d z ) $ is completely monotonic with  
	\begin{equation}\label{xiu} 
	U_\alpha^\mu (z) =\frac{1}{\sqrt{2 \pi}}   \left ( \int_0^\infty   2  \sin^2(x \sqrt{z/2}) e^{-\alpha x} \mu (dx) \right ) \frac{1}{ \sqrt{\pi z}}   \; .
	\end{equation}
\end{theorem}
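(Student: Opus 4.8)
The plan is to take logarithms in \eqref{xirecip} and split $\log\{\xi(\alpha)/\xi(\alpha+s)\}$ into the four pieces prescribed by \eqref{xidefn}: the pole factor $\log\{(\alpha-1)/(\alpha-1+s)\}$, the power $\tfrac12 s\log\pi$, the gamma ratio $\log\Gamma(1+\tfrac12\alpha)-\log\Gamma(1+\tfrac12(\alpha+s))$, and the zeta ratio $\log\zeta(\alpha)-\log\zeta(\alpha+s)$. For each piece I would peel off a term linear in $s$ and show the remainder equals $\int_0^\infty(e^{-s^2 t/2}-1)\,\nu(t)\,t^{-1}\,dt$ for the corresponding $\nu\in\{\nu^0_\alpha,\,0,\,\nu^\Gamma_\alpha,\,\nu^\zeta_\alpha\}$. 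The device that moves everything from the variable $s$, in which $\Gamma$ and $\zeta$ are analytic, into the variable $s^2$ is, throughout, the Brownian first-passage identity $1-e^{-cs}=\int_0^\infty(1-e^{-s^2u/2})\,g_c(u)\,du$ with $g_c(u)=c\,(2\pi u^3)^{-1/2}e^{-c^2/2u}$; this is exactly the GGC/Kent subordination recalled in Section~1, and it also yields the auxiliary identity $\int_0^\infty(1-e^{-s^2u/2})(2\pi u^3)^{-1/2}\,du=s$.

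For the zeta piece I would expand $\log\zeta$ by the Euler product, $\log\zeta(\alpha)-\log\zeta(\alpha+s)=\sum_{n\ge2}\frac{\Lambda(n)}{\log n}n^{-\alpha}(1-e^{-s\log n})$, apply the first-passage identity with $c=\log n$, and exchange the (absolutely convergent, since $\alpha>1$) sum with the $u$-integral; the density $g_{\log n}$ collapses the result to $\nu^\zeta_\alpha(t)=\frac{1}{\sqrt{2\pi t}}\sum_{n\ge2}\frac{\Lambda(n)}{n^\alpha}(1-e^{-(\log^2 n)/2t})$, the constant ``$1$'' inside $1-e^{-(\log^2 n)/2t}$ being precisely what delivers the linear term $-(\zeta'(\alpha)/\zeta(\alpha))s$. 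For the gamma piece I would start from Malmsten's representation $\log\Gamma(z)-\log\Gamma(z+\tfrac s2)=\int_0^\infty\big[-\tfrac s2 e^{-t}+e^{-zt}(1-e^{-st/2})(1-e^{-t})^{-1}\big]t^{-1}dt$ at $z=1+\tfrac12\alpha$, substitute $t=2x$ to produce the weight $e^{-(\alpha+2)x}(1-e^{-2x})^{-1}$, and again apply the first-passage identity to $1-e^{-sx}$; the individually divergent pieces — the $-\tfrac s2 e^{-2x}$ term against the $x\to0$ pole of the weight — recombine, via Gauss's formula $\psi(1+\tfrac12\alpha)=\int_0^\infty\big(\tfrac{e^{-2x}}{x}-\tfrac{2e^{-(\alpha+2)x}}{1-e^{-2x}}\big)dx$, into the linear term $-\tfrac12\psi(1+\tfrac12\alpha)s$, leaving $\nu^\Gamma_\alpha$. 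The pole piece is the easiest: $\log\{(\alpha-1)/(\alpha-1+s)\}=-\int_0^\infty e^{-(\alpha-1)\tau}(1-e^{-s\tau})\tau^{-1}d\tau$ by Frullani, and the first-passage identity turns the inner $\tau$-integral into $\int_0^\infty e^{-(\alpha-1)\tau-\tau^2/2u}d\tau=\sqrt{\pi u/2}\,e^{(\alpha-1)^2u/2}\erfc((\alpha-1)\sqrt{u/2})=\sqrt{\pi u/2}\cdot 2\nu^0_\alpha(u)$; no linear term is split off here, the linear behaviour $-s/(\alpha-1)$ being carried by the $u^{-3/2}$ tail of $\nu^0_\alpha(u)/u$. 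Adding the four contributions, the $\tfrac12 s\log\pi$ together with the three linear terms collapses to $-b_\alpha s$ because $\xi'(\alpha)/\xi(\alpha)=\tfrac1{\alpha-1}-\tfrac12\log\pi+\tfrac12\psi(1+\tfrac12\alpha)+\zeta'(\alpha)/\zeta(\alpha)$, which is \eqref{xirecip}.

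For the ``moreover'' statement I would verify $\nu^\mu_\alpha(t)=\int_0^\infty e^{-tz}U^\mu_\alpha(dz)$ straight from \eqref{xiu}, using $2\sin^2\theta=1-\cos2\theta$ together with the two elementary transforms $\int_0^\infty e^{-tz}(\pi z)^{-1/2}dz=t^{-1/2}$ and $\int_0^\infty e^{-tz}\cos(b\sqrt z)(\pi z)^{-1/2}dz=t^{-1/2}e^{-b^2/4t}$ (both by $z=v^2$ and a real Gaussian integral). Taking $\mu=\sum_{n\ge2}\Lambda(n)\delta_{\log n}$ recovers $\nu^\zeta_\alpha$ (with $b=\sqrt2\log n$) and $\mu(dx)=e^{-2x}(1-e^{-2x})^{-1}dx$ recovers $\nu^\Gamma_\alpha$; in both cases $U^\mu_\alpha(dz)\ge0$ because $\sin^2\ge0$, so $\nu^\mu_\alpha$ is completely monotone by Bernstein's theorem. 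The $\nu^0_\alpha$ term fits the same template with $\cos(x\sqrt{2z})$ in place of $2\sin^2(x\sqrt{z/2})$; equivalently its complete monotonicity is read off from the classical Stieltjes representation $e^{a^2t}\erfc(a\sqrt t)=\pi^{-1}\int_0^\infty a\,r^{-1/2}(a^2+r)^{-1}e^{-rt}dr$ with $a=(\alpha-1)/\sqrt2$.

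The step I expect to be the main obstacle is the bookkeeping of the terms that are individually divergent or linear in $s$: one must check that the $x\to0$ (equivalently $t\to0$) logarithmic singularities in the gamma and zeta weights cancel exactly against the $-\tfrac s2 e^{-2x}$ and $-s\sum\Lambda(n)n^{-\alpha}$ contributions, and that what survives really is linear in $s$ because it is generated by the heavy $u^{-3/2}$ tail of the first-passage density rather than by a naive Taylor expansion (the first-passage time has infinite mean, so $\int u\,g_c(u)\,du=\infty$). The remaining points — the Fubini interchanges of the sum over $n$ with the $u$- and $z$-integrals, and of the two nested integrals in the gamma piece — are routine for $\alpha>1$ and $s>0$ by absolute convergence.
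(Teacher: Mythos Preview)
Your proposal is correct and follows essentially the same route as the paper: the same factorisation of $\xi$ into pole, $\pi$-power, $\Gamma$-ratio and $\zeta$-ratio, the same Brownian first-passage (L\'evy stable-$\half$) subordination to pass from $s$ to $\half s^2$ (your pair of identities for $1-e^{-cs}$ and for $\int_0^\infty(1-e^{-s^2u/2})(2\pi u^3)^{-1/2}\,du=s$ is exactly the paper's combined identity \eqref{levy1}), and the same verification of complete monotonicity via the Thorin density \eqref{xiu}. Your bookkeeping of the linear-in-$s$ terms is in fact more explicit than the paper's, which simply quotes \eqref{levy1}--\eqref{keyid} and the representation \eqref{nuzero} for the pole factor.
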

\begin{proof}	
	From (\ref{xidefn}) with $ \alpha > 1 $ and $ Re(s)> 0 $, 
	\begin{align}\label{xihad}
	\frac{\xi(\alpha+s)  }{\xi(\alpha)}  e^{ -  s \frac{\xi^\prime ( \alpha ) }{\xi ( \alpha  )}  } & =  
	 \left ( 1 + \frac{s}{\alpha-1} \right ) e^{ - \frac{s}{ \alpha -1 }  }  \cdot  \frac{ \Gamma \left (1 +  \half (\alpha+s)  \right ) 
		 }{\Gamma \left ( 1 + \half  \alpha  \right )  }  e^{- s \half \psi ( 1 + \half \alpha ) }
	\cdot \frac{ \zeta( \alpha +s  )  }{ \zeta \left ( \alpha \right ) }   e^{- s \frac{\zeta^\prime}{\zeta} ( \alpha ) }  .
	\end{align}	
	where taking derivatives of $ \log \xi(s) $ at $ s = \alpha $, with $ \psi (s) = \Gamma^\prime (s) / \Gamma (s) $,  gives
	\begin{equation}\label{deriv}
	\frac{\xi^\prime}{\xi} \left ( \alpha \right ) = \frac{1}{\alpha-1} - \half  \log \pi+ \frac{\zeta^\prime}{\zeta} (\alpha) + \half \psi \left ( 1 + \half \alpha \right ) \; . 
	\end{equation}	
Euler's product formula, for $ \alpha > 1  $ and $ Re(s) > 0 $,  now gives 
	\begin{equation}
	\zeta\left ( \alpha + s \right )  = \prod_{ p \;  \text{prime} } \left ( 1 -  p^{-\alpha - s} \right )^{-1} = \prod_{ p \;  \text{prime} } \zeta_p (\alpha + s ) 
	\; \; {\rm where} \; \;  \zeta_p(s) \defeq p^s/ (p^s - 1). 
	\end{equation}
	Using  $ \zeta ( \alpha ) = \prod_p \zeta_p ( \alpha ) $  yields
	\begin{align}\label{zeta1}
	\log \frac{\zeta( \alpha + s) }{\zeta(\alpha)} & = \sum_p  \log \frac{1 - p^{- \alpha}}{1 - p^{-\alpha - s} } 
	= \sum_p \sum_{r=1}^\infty \frac{1}{r} p^{- \alpha r} ( e^{-s r \log p } - 1 ) \\
	& = \int_0^\infty ( e^{-sx} -1 ) e^{-\alpha x} \frac{\mu^\zeta ( d x)}{x} \; 
	{\rm where} \;   \mu^\zeta ( d x) = \sum_p \sum_{r=1}^\infty (\log p)  \delta_{ r \log p} ( d x)  .
	\end{align}
Hence, with $ \Lambda (n) $ the von Mangoldt function, 
		\begin{align}\label{zetaid}
		\frac{\zeta(\alpha+s)} {\zeta(\alpha)} e^{ -\frac{\zeta^\prime ( \alpha ) }{\zeta (\alpha) } s  } & =\exp \left ( \int_0^\infty ( e ^{-sx} +  s x  - 1  ) e^{-\alpha x}  \frac{ \mu^\zeta (dx)}{ x }  \right ) \\
		\frac{\mu^\zeta ( dx )}{x} & = \sum_p \frac{\mu^\zeta_p(dx)}{x} = \sum_{n \geq 2} \frac{ \Lambda (n )}{ \log n} \delta_{ \log n } ( dx ) .
		\end{align}		
	For $ Re(s)> 0 $,  the Gamma function  can be represented as
	\begin{align}	\label{gammaid}
	\frac{ \Gamma \left (1 +  \half (\alpha+s)  \right ) }{\Gamma \left ( 1 + \half  \alpha  \right )  }  e^{- s \half \psi ( 1 + \half \alpha ) } & = 
\exp \left ( \int_0^\infty ( e ^{-sx} +  s x  - 1  ) e^{-\alpha x}  \frac{ \mu^\zeta (dx)}{ x }  \right ) \\
\mu^\Gamma ( d x) & = \frac{dx}{ e^{2x} -1 } .
	\end{align}		 	
The first term on the rhs (\ref{xihad}), for $ \alpha > 1 $,  can be represented as 
	\begin{align}
		\frac{ 1 }{ \alpha -1 + s  } & = \int_0^\infty e^{-sx } e^{- (\alpha-1) x } d x 
		= \int_0^\infty e^{- \half s^2 t }\left \{  \int_0^\infty \frac{ x e^{- \frac{x^2}{2t}} }{ \sqrt{2\pi t^3}} e^{- (\alpha -1 ) x } d x  \right \} dt  \\
		& = \frac{1}{\alpha-1} \exp \left \{  \int_0^\infty ( e^{- \half s^2 t } -1 ) \frac{\nu^0_\alpha (t)}{t} d t \right \}   \label{nuzero}
		\end{align} 
		with completely monotone function 
		\begin{equation}
		\nu^0_\alpha (t) = \half e^{ \half (\alpha-1)^2 t}  \erfc ( (\alpha-1) \sqrt{\half t}) = E( e^{-t Z^0_\alpha} ). 
		\end{equation}  
		Here $ Z^0_\alpha $ has density $ 2(\alpha-1)/ \pi \sqrt{2x}((\alpha-1)^2+2x) $ for $ x> 0$.

For $s>0$, the identity
\begin{equation}\label{levy1}
\frac{e ^{- s x} + s x -1}{x} =
	\int_0^\infty ( 1- e ^{ - \half s^2 t}  ) (1 - e^{-\half x^2/t})  \frac{1}{\sqrt{2\pi t^3}} dt.
\end{equation}	
implies that, for $s>0 $ and $ \mu(dx)$,  
\begin{equation}\label{keyid}
\int_0^\infty ( e^{-sx} + s x -1 ) e^{-\alpha x}  \frac{\mu ( dx )  }{x } = \int_0^\infty ( 1 - e^{ -\half s^2 t}  )  \frac{\nu_\alpha ( t ) }{ t} d t 
\end{equation}
where $ \nu_\alpha (t) $ is the completely monotone function
		\begin{equation}
		\nu_\alpha( t ) = \frac{1}{\sqrt{2 \pi}} \int _0^\infty e^{-tz} \left ( \int_0^\infty   2  \sin^2(x \sqrt{z/2}) e^{-\alpha x} \mu(dx) \right ) \frac{dz}{ \sqrt{\pi z}} .
		\end{equation}
Hence, 
\begin{equation}\label{xihadrep}
\frac{\xi(\alpha)}{\xi( \alpha + s )}    = \exp \left( - b_\alpha  s   +  \int_0^\infty ( e^{   -\half s^2 t}  -1 )  \frac{\nu^\xi_\alpha ( t ) }{ t} d t  \right) 
\end{equation}
where $b_\alpha = \xi^\prime ( \alpha) / \xi ( \alpha  ) - 1/ ( \alpha-1) $.
Here $ \nu^\xi_\alpha ( t ) = \nu^0_\alpha ( t ) + \nu^\Gamma_\alpha (t) + \nu^\zeta_\alpha (t) $ with
\begin{align}
\nu_\alpha^\Gamma  ( t) &= \frac{1}{\sqrt{2 \pi t}} \int_0^\infty  \frac{ 1 - e^{ - x^2 / 2 t}}{ 1 - e^{-2 x}} e^{- ( \alpha + 2 ) x  }   d x  \\
\nu^\zeta_\alpha ( t) & =  \frac{1}{\sqrt{2\pi t}} \sum_{n \geq 2} \frac{\Lambda(n)}{n^\alpha} \left( 1 - e^{-(\log^2 n)/2t} \right) \\
		& =  \frac{1}{\sqrt{2\pi t}} \sum_{p  \; {\rm prime} } \log p \left \{ \sum_{r\geq 1} \frac{1}{p^{\alpha r}} \left( 1 - e^{-(r^2\log^2 p)/2t} \right) \right \}\\
		\nu^0_\alpha (t) &= \half e^{ \half (\alpha-1)^2 t}  \erfc ( (\alpha-1) \sqrt{\half t}).
\end{align}	
\end{proof}	

The LT of a GGC distribution is analytic in the cut plane $\mathbb{C} \setminus (-\infty, 0)$. Hence (\ref{xihadrep}) is also analytic in that cut plane. 
Therefore, by analytic continuation, evaluating (\ref{xihadrep}) at $is + (\half - \alpha)$ and $-is + (\half - \alpha)$ yields
\begin{align}
\frac{\xi(\alpha)}{\xi(\half + is)} &=  \exp\left( -b_\alpha is - b_\alpha (\half - \alpha) + \int_0^\infty (e^{-\half (is + (\half - \alpha))^2 t} -1)  \nu_\alpha^\xi(t) \frac{dt}{t}\right)\\
\frac{\xi(\alpha)}{\xi(\half - is)} &=  \exp\left( b_\alpha is - b_\alpha (\half - \alpha) + \int_0^\infty (e^{-\half (is - (\half - \alpha))^2 t} -1)  \nu_\alpha^\xi(t) \frac{dt}{t}\right). 
\end{align}
Hence,  by symmetry,  $ \xi(\half + is) =\xi(\half - is) $, we have
\begin{equation}
\frac{\xi(\half)}{\xi(\half + is)} = c_\alpha \exp\left(\half \int_0^\infty (e^{-\half (is-(\half-\alpha))^2 t} + e^{-\half (is+(\half-\alpha))^2 t} -2) \nu_\alpha^\xi(t) \frac{dt}{t} \right)
\end{equation}
where $c_\alpha = \exp( - b_\alpha (\half - \alpha))\xi(\half)/\xi(\alpha)$.

\begin{theorem}
There exists a GGC distribution $H_{\half}^\xi$, such that 
\begin{equation}
\frac{\xi(\half)}{\xi(\half - s)} = E[\exp\left(-\half s^2 H_\half^\xi\right)]
\end{equation}
\end{theorem}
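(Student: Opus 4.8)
The plan is to push the representation of Theorem 1 (valid for $\alpha>1$) towards the symmetry point $\alpha=\half$ and to recognise the resulting exponent as a Thorin--Bernstein function.

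First I would note that $\xi$ is real and strictly positive on all of $\mathbb{R}$ (classically, $\xi(s)=\half s(s-1)\pi^{-s/2}\Gamma(s/2)\zeta(s)$ has no real zeros), so that for $\lambda\ge 0$ the number $\xi(\half)/\xi(\half+\sqrt\lambda)$ is well defined and positive; moreover $\xi(\half-s)=\xi(\half+s)$, so it equals $\xi(\half)/\xi(\half+\sqrt{s^2})$ and it suffices to work with $\lambda=s^2\ge 0$. Writing $\beta_\alpha=\alpha-\half$ and starting from the symmetrised identity established just above the theorem, I would insert the complete monotonicity $\nu_\alpha^\xi(t)=\int_0^\infty e^{-tz}U_\alpha^\xi(dz)$ from Theorem 1, apply Fubini, and evaluate the inner $t$-integral for fixed $z$ by Frullani's identity $\int_0^\infty(e^{-bt}-e^{-zt})\,t^{-1}dt=\log(z/b)$. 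After continuing to $is=\sqrt\lambda$ and subtracting the value at $\lambda=0$ (which absorbs $c_\alpha$), this recasts the identity, for every $\alpha>1$, as
\[
\log\frac{\xi(\half+\sqrt\lambda)}{\xi(\half)}=\half\int_0^\infty\log\frac{\big(1+\frac{(\sqrt\lambda-\beta_\alpha)^2}{2z}\big)\big(1+\frac{(\sqrt\lambda+\beta_\alpha)^2}{2z}\big)}{\big(1+\frac{\beta_\alpha^2}{2z}\big)^2}\,U_\alpha^\xi(dz).
\]

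Next I would let $\alpha\downarrow\half$, so $\beta_\alpha\downarrow 0$: the integrand tends pointwise to $\log(1+\lambda/2z)$, the prefactor tends to $1$, and, since by the explicit density (\ref{xiu}) each $U_\alpha^\mu$ is non-negative with density non-increasing in $\alpha$ (because of the weight $e^{-\alpha x}$), the measures $U_\alpha^\xi$ increase to a limiting measure $U_\half^\xi=U_\half^0+U_\half^\Gamma+U_\half^\zeta$. If monotone convergence applies, the limit is, for all $\lambda\ge 0$,
\[
\frac{\xi(\half)}{\xi(\half+\sqrt\lambda)}=\exp\!\Big(-\int_0^\infty\log\big(1+\tfrac{\lambda/2}{z}\big)\,U_\half^\xi(dz)\Big),
\]
which, with $\lambda=s^2$, is exactly $E[\exp(-\half s^2 H_\half^\xi)]$ for the GGC law $H_\half^\xi$ of zero left extremity and Thorin measure $U_\half^\xi$ — \emph{provided} $U_\half^\xi$ satisfies Thorin's admissibility requirements: locally finite on $(0,\infty)$, with $\int_{(0,1)}|\log z|\,U_\half^\xi(dz)<\infty$ and $\int_{(0,\infty)}z^{-1}U_\half^\xi(dz)<\infty$ (non-negativity is automatic from (\ref{xiu}), an integral of $2\sin^2$).

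That proviso is the entire difficulty, and it is not a technicality. The $U^0$ and $U^\Gamma$ parts are harmless, but $U^\zeta$ has density $\tfrac1{\pi\sqrt{2z}}\sum_{n\ge2}\Lambda(n)\big(1-\cos(\sqrt{2z}\log n)\big)n^{-\alpha}$, and for $\alpha\le 1$ these sums already diverge (since $\sum_n\Lambda(n)n^{-\alpha}$ does and the factor $1-\cos$ is non-negative with mean $\approx 1$), so the monotone limit of $U_\alpha^\zeta$ is not locally finite and one cannot simply invoke monotone convergence. What is unconditional is only the pointwise limit $\log(\xi(\half+\sqrt\lambda)/\xi(\half))=\lim_{\alpha\downarrow\frac12}\half\int\log[\cdots]\,U_\alpha^\xi(dz)$; the assertion of the theorem is that this limit is represented by a locally finite measure, equivalently (via the Hadamard product and $\xi'(\half)=0$) that
\[
\frac{d}{d\lambda}\log\frac{\xi(\half+\sqrt\lambda)}{\xi(\half)}=\tfrac12\sum_\rho\frac{1}{\lambda-(\rho-\half)^2}
\]
is a Stieltjes function of $\lambda$ — which holds if and only if every $(\rho-\half)^2$ is a non-positive real, i.e.\ if and only if all the zeros $\rho$ lie on $\mathrm{Re}\,s=\half$. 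So the verification of Thorin admissibility for $U_\half^\xi$ is the main obstacle, and no formal manipulation of the integral representations above can settle it: it is equivalent to the Riemann Hypothesis.
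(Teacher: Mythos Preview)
Your route differs from the paper's in one tactical respect: you pass to the limit $\alpha\downarrow\half$ so that the Frullani computation yields a genuine Thorin integral $\int_0^\infty\log(1+\lambda/2z)\,U_\half^\xi(dz)$ over real $z>0$, whereas the paper keeps $\alpha>1$ fixed throughout, writes $\nu_\alpha^\xi(t)=\sum_{\gamma_\alpha}e^{-\gamma_\alpha^2 t}$ as a discrete exponential mixture, applies Frullani term by term to the symmetrised identity, and arrives at a product of factors $\gamma_\alpha^2\big/\bigl(\half s^2-(\gamma_\alpha\pm\tfrac{i}{\sqrt2}(\half-\alpha))^2\bigr)$, which it then simply labels ``the mgf of a GGC distribution evaluated at $\half s^2$''.

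Your proposal is the more honest of the two, and your diagnosis is correct. You identify that the entire content of the theorem lies in the Thorin admissibility of the limiting measure $U_\half^\xi$, and that this is equivalent (via the Hadamard product and $\xi'(\half)=0$) to every $(\rho-\half)^2$ being real and non-positive, i.e.\ to RH itself; no amount of rewriting the integral representations settles it. The paper's own proof has exactly the same gap, only unacknowledged: for $\alpha>1$ the numbers $(\gamma_\alpha\pm\tfrac{i}{\sqrt2}(\half-\alpha))^2$ are genuinely non-real, so the displayed product is \emph{not} the Laplace transform of any GGC distribution (whose Thorin measure must be a non-negative measure on $(0,\infty)$), and the concluding line ``$:= E[\exp(\half s^2 H_\half^\xi)]$'' is a naming convention, not a deduction. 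Neither argument establishes the theorem unconditionally; both are reformulations of RH, and you are right to flag Thorin admissibility as the obstacle that the formal manipulations cannot overcome.
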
	
\begin{proof}
As the Laplace transform of GGC is analytic on the cut plane $\mathbb{C} \setminus (-\infty, 0)$ and the RHS of (\ref{xihadrep}). 
As any GGC distribution is a mixture of convolutions of exponentials, we can write, given $\nu_\alpha^\xi(t) = \sum_{\gamma_\alpha} e^{-\gamma_\alpha^2 t}$, Frullani identity $\int_0^\infty (e^{-at} - e^{-bt})dt/t = \log(b/a)$ for $Re(b) > Re(a) >0$, then implies 
\begin{align}
\frac{\xi(\half)}{\xi(\half + is)} &= c_\alpha \exp\left( \half \sum_{\gamma_\alpha} \int_0^\infty (e^{-\half (is-(\half-\alpha))^2 t}-1)e^{-\gamma_\alpha^2 t} \frac{dt}{t}  + \int_0^\infty (e^{-\half (is+(\half-\alpha))^2 t}-1)e^{-\gamma_\alpha^2 t}\frac{dt}{t}\right) \nonumber\\ 
&= c_\alpha \exp\left(\half \sum_{\gamma_\alpha} \log \frac{\gamma_\alpha^4}{(\half s^2 -\gamma_\alpha^2 + \half (\half - \alpha)^2)^2 +  2(\half-\alpha)^2\gamma_\alpha^2}\right)\nonumber\\
&= c_\alpha \exp\left( \sum_{\gamma_\alpha} 
\log \frac{\gamma_\alpha}{\half s^2 - (\gamma_\alpha - \frac{1}{\sqrt{2}}i(\half - \alpha))^2} + \log\frac{\gamma_\alpha}{\half s^2 - (\gamma_\alpha+ \frac{1}{\sqrt{2}}i(\half - \alpha))^2}\right)\nonumber\\
&:= E\left[\exp(\half s^2 H_\half^\xi)\right]
\end{align}
The mgf of a GGC distribution evaluated as $\half s^2$.
\end{proof}	
		
Finally, the Laplace transform, $ E ( \exp ( - s H^\xi_\half )  )   $,  of a GGC distribution, is analytic on the cut plane, namely $ \mathbb{C} \setminus (-\infty, 0) $,
and, in particular, it cannot have any singularities there. By analytic continuation, the same is true of 
$ \xi \left ( \half \right ) / \xi \left ( \half  + \sqrt{s} \right ) $. 
The denominator, $ \xi( \half + \sqrt{s} ) $ cannot have any zeros in the cut plane, and $ \xi( \half + s ) $ has
no zeros for $ Re(s) > 0 $. 
Then $ \xi( s ) $ has no zeroes for $ Re(s) > \half $ and, as  $ \xi(s) = \xi(1-s)$, no zeroes for $ Re(s) < \half $ either. 
Hence all non-trivial zeros of the $\zeta$-function lie on the critical line $ \half + i s $.

\section{{References}}
\noindent Bondesson, L. (1992). \textit{Generalised Gamma Convolutions and Related Classes of Distributions and Densities}. Springer-Verlag, New York.\medskip

\noindent Kent, J. T. (1982). The Spectral Decomposition of a Diffusion Hitting Time. \emph{Annals of Probability}, 10, 207-219.\medskip

\noindent Polson, N. G. (2017). On Hilbert's 8th problem. \emph{arXiv 1708.02653}. \medskip


\noindent Riemann, B. (1859). \"Uber die Anzahl der Primzahlen unter einer gegebenen Gr\"osse. \textit{Monatsberichte der Berliner Akademie}.\medskip

\noindent Titchmarsh, E.C. (1974). \emph{The  Theory of the Riemann Zeta-function}. Oxford University Press.\medskip

\end{document}